\def\rk{\mathop{\fam0 rk}}
\def\R{{\mathbb R}} \def\Z{{\mathbb Z}}    \def\Q{\Bbb Q}
\let\Bbb=\mathbb
\long\def\comment#1\endcomment{}
\theoremstyle{theorem}
\newtheorem{theorem}{Theorem}[section]
    \newtheorem{lemma}[theorem]{Lemma}
\theoremstyle{definition}
\newtheorem{remark}[theorem]{Remark}
\author{A. I. Bikeev \footnote{bikeev99@mail.ru, Moscow Institute of Physics and Technology}}
\begin{document}

\title{Criteria for integer and modulo 2 embeddability of graphs to surfaces 
\footnote{Supported by the Russian Foundation for Basic Research Grant No. 19-01-00169. I am grateful for many useful discussions to A. Skopenkov, R. Fulek, J. Kyn\v cl, A. Kliaczko and E. Kogan.}\, 
\footnote{
We borrowed Remark \ref{r:crit2}, Remark \ref{r:clorel},  and definitions of $\Z_2$- and $\Z$- embeddings, compatibility (modulo 2) and algebraic intersection number from  \cite{KS21}.}}
\maketitle

\tableofcontents
\setcounter{section}{0}

\section{Introduction and main results}\label{s:intr}


The study of graph drawings on 2-surfaces is an active area of mathematical research. 
Surveying these studies is beyond the scope of the present paper; see Remark \ref{r:clorel} for results most closely related to ours.

{\it Our main results} are criteria for $\Z_2$-embeddability and $\Z$-embeddability (see definitions below) of graphs to surfaces (Theorems \ref{t:crit2sg} and  \ref{t:critsg}). See Remarks \ref{r:crit2} and \ref{r:critsg} for applications,  comments and relations to other results.

In this paper we use the following conventions and notations.
Let $K$ be a graph and $M$ be a $2$-dimensional surface. Denote by $V=V(K)$ the set of vertices of graph $K$. Denote by $E=E(K)$ the set of edges of graph $K$.
We work in the piecewise-linear (PL) category. We shorten `$2$-dimensional surface' to `$2$-surface'.

A general position PL map $f:K\to M$ is called a
{\bf $\Z_2$-(almost) embedding} (a.k.a. Hanani-Tutte drawing) if
$|f\sigma\cap f\tau|$ is even for any pair $\sigma,\tau$ of non-adjacent (a.k.a. independent) edges.

\begin{figure}[ht]
\center{\includegraphics[scale=0.5, width=100pt]{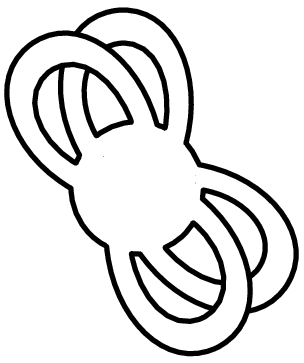}\qquad \includegraphics[scale=0.5, width=100pt]{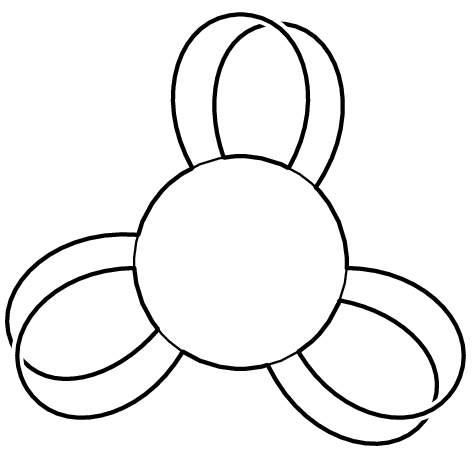}}
\caption{Left:  the sphere with $2$ handles and a hole. Right: the disk with M\"obius bands}
\label{f:sg}
\end{figure}


\smallskip
{\bf Definitions of $S_g$ and $M_m$.} Denote by $S_g$ (see Fig. \ref{f:sg} left) the union of a disk and $2g$ non-twisted ribbons $\lambda_1,\ldots,\lambda_{2g}$ such that the ribbons $\lambda_{2i-1}$ and $\lambda_{2i}$ interlace for each $i \in [g]$ and the other pairs of the ribbons do not interlace. This $S_g$ is homeomorphic to the sphere with $g$ handles and a hole.
$\Z_2$-embeddability of a graph to the sphere with $g$ handles is equivalent to $\Z_2$-embeddability of the graph to $S_g$.

Denote by $M_m$ \textit{disk with $m$ M\"obius bands} (see Fig. \ref{f:sg} right), i.e., the union of a disk and $m$ twisted pairwise non-interlacing ribbons $\mu_1,\ldots,\mu_m$. This $M_m$ is homeomorphic to the connected sum of $m$ projective planes with a hole.
$\Z_2$-embeddability of a graph to the connected sum of $m$ projective planes is equivalent to $\Z_2$-embeddability of the graph to $M_m$.

\smallskip
A symmetric matrix with $\Z_2$-entries is

$\bullet$ \textbf{even} (a.k.a. alternate) if its diagonal contains only zeros;

$\bullet$ \textbf{odd} (a.k.a. non-alternate) if its diagonal contains at least one entry $1$.

The graph $K$ is called {\bf compatible modulo 2} to a symmetric matrix $A$ of size $|E| \times |E|$ with $\Z_2$-entries
if there is a general position PL map $f:K\to\R^{2}$ such that
$$(C_2)\qquad A_{\sigma,\tau}=|f\sigma\cap f\tau|_2\quad\text{for any non-adjacent edges $\sigma,\tau$ of $K$}.$$
Clearly, compatibility modulo 2 is algorithmically decidable.

\begin{theorem}\label{t:crit2sg}
(a) \textnormal{(Fulek--Kyn\v cl)} A graph $K$ has a $\Z_2$-embedding to $S_g$ if and only if
$K$ is compatible modulo 2 to some even matrix $A$ such that $\rk A\leq 2g$.

(b) A graph $K$ has a $\Z_2$-embedding to $M_m$ if and only if
$K$ is compatible modulo 2 to some odd matrix $A$ such that $\rk A\le m$.
\end{theorem}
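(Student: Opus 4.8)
I want to prove both directions of the equivalence, and for part (b) parallel part (a) (Fulek–Kynčl) with the single modification that the matrix is *odd* (has a $1$ on the diagonal) and the rank bound is $m$ instead of $2g$. The key geometric idea is that $S_g$ and $M_m$ are both obtained from a disk $D$ by attaching ribbons, and a general position map $K \to S_g$ (resp. $K \to M_m$) can be pushed, up to $\Z_2$-isotopy of the drawing, into "normal form": the part of the drawing inside the disk is a planar-type drawing, and each edge crosses each ribbon some number of times; the parity of how an edge traverses a ribbon determines a $\Z_2$-vector, and the $\Z_2$-intersection number of two independent edges in the surface differs from their $\Z_2$-intersection number in $\R^2$ by a bilinear form in these vectors.

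Let me sketch the forward direction first. Suppose $f : K \to M_m$ is a $\Z_2$-embedding. Retract $M_m$ onto the spine (a wedge of $m$ circles) as usual; equivalently, for each Möbius band $\mu_j$, take a core arc $c_j$ and record for each edge $\sigma$ the number $a_{\sigma,j} = |f\sigma \cap c_j|_2 \in \Z_2$. Cutting $M_m$ along the $c_j$'s yields a disk $D'$, and the restriction $f' : K \to D' \subset \R^2$ is a general position map (after a small perturbation to make crossings transverse). I claim
$$|f\sigma \cap f\tau|_2 = |f'\sigma \cap f'\tau|_2 + \sum_{j=1}^{m} a_{\sigma,j} a_{\tau,j} \quad \text{for independent } \sigma,\tau.$$
The correction term counts, modulo 2, the pairs of "opposite" passages of $\sigma$ and $\tau$ through the same band — this is where the twisting of the band matters: a twisted band forces a crossing between two arcs passing through it in the same homological direction. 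So setting $A_{\sigma,\tau} := |f\sigma \cap f\tau|_2$ for independent $\sigma,\tau$ and on the diagonal (and on adjacent edge pairs) setting $A_{\sigma,\tau} := \sum_j a_{\sigma,j} a_{\tau,j}$, i.e. $A = B + \Gamma$ where $B$ is the realizable (by $f'$) intersection pattern — so $K$ is compatible modulo 2 to $B$ — and $\Gamma_{\sigma,\tau} = \sum_j a_{\sigma,j}a_{\tau,j}$. Wait: I need $A$ itself to be realizable and odd of rank $\le m$; rather, the statement is that $K$ is compatible modulo 2 to $A$ where $A$ encodes the $M_m$-intersection numbers. The cleaner formulation: let $a_j \in \Z_2^E$ be the column $(a_{\sigma,j})_\sigma$; then the matrix $\Gamma = \sum_{j=1}^m a_j a_j^T$ has rank $\le m$ over $\Z_2$, and is odd unless all $a_j$ have all-even... no — $\Gamma_{\sigma,\sigma} = \sum_j a_{\sigma,j}^2 = \sum_j a_{\sigma,j}$, which is $1$ for some $\sigma$ precisely when the drawing genuinely uses some band with odd multiplicity, which we may assume WLOG (otherwise the drawing lies in a disk and $m=0$ suffices trivially; or pad). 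The matrix $A := B + \Gamma$ then satisfies: $K$ is compatible mod 2 to $A$? No — $K$ is compatible mod 2 to $B$ (witnessed by $f'$), and $A = B + \Gamma$; but compatibility is about a *single* matrix. I think the correct reading is that $A$ is the full $|E|\times|E|$ symmetric matrix with $A_{\sigma,\tau} = |f\sigma \cap f\tau|_2$ for independent $\sigma,\tau$ and $A$ is *defined* to equal $B + \Gamma$ on all entries (choosing the diagonal and adjacent entries via $\Gamma$); then $K$ compatible mod 2 to $B$ means $f'$ realizes the independent-pair entries of $B$, hence realizes the independent-pair entries of $A - \Gamma$; that is not quite "compatible to $A$" either. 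The honest statement: $K$ is compatible mod 2 to $A$ for $A$ the symmetric $\Z_2$-matrix that agrees with $\Gamma$ on the diagonal and adjacent pairs, and with the $f$-intersection numbers on independent pairs — and $f'$ witnesses this because on independent pairs $|f'\sigma\cap f'\tau|_2 = A_{\sigma,\tau} - \Gamma_{\sigma,\tau} + \Gamma_{\sigma,\tau}$... I will need to set up bookkeeping carefully so that $A = B + \Gamma$ with $B$ the matrix compatibility uses and $\Gamma$ rank $\le m$; since adding $\Gamma$ changes only non-independent entries of $B$ (which compatibility ignores) plus independent entries, the precise claim is: the matrix $A$ defined by $A_{\sigma,\tau} = (\text{realized value by }f')_{\sigma,\tau} + \Gamma_{\sigma,\tau}$ is compatible-mod-2-realizable iff ... hmm, this is the subtle point.

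**The converse and the main obstacle.** For the converse, given $f : K \to \R^2$ realizing $A$ (compatible mod 2) with $A$ odd of rank $\le m$, diagonalize $A$ over $\Z_2$: since $A$ is symmetric with a nonzero diagonal entry, a symmetric $\Z_2$-bilinear-forms argument (every symmetric matrix over $\Z_2$ with a nonzero diagonal entry is congruent to a diagonal matrix, and odd rank-$r$ ones are congruent to $I_r \oplus 0$) lets us write $A = \sum_{j=1}^{m} a_j a_j^T$ for some $a_j \in \Z_2^E$ — but the congruence rewrites *rows and columns indexed by edges*, which we cannot do, since $A$'s index set is fixed. So instead: $\rk A \le m$ and $A$ odd gives vectors $a_1,\dots,a_m \in \Z_2^E$ (not via congruence but via factorization $A = U U^T$, $U$ an $|E|\times m$ matrix over $\Z_2$, possible exactly when $A$ is symmetric *and* odd or zero — the even obstruction is why part (a) needs a separate normal form, Arf-type, of rank $2g$ giving $g$ hyperbolic pairs). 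Then reverse the geometric construction: start with $f$, attach $m$ Möbius bands to the disk, and reroute edge $\sigma$ through band $\mu_j$ exactly $a_{\sigma,j}$ times (mod 2), which changes $|f\sigma\cap f\tau|_2$ by exactly $\sum_j a_{\sigma,j}a_{\tau,j} = A_{\sigma,\tau}$... giving a new drawing into $M_m$ whose independent-pair intersection numbers become $|f\sigma\cap f\tau|_2 + A_{\sigma,\tau}$ — that's the wrong sign unless $f$ was chosen to realize $A$ with the convention that we want the $M_m$-drawing to have *zero* independent intersections. I would fix conventions so that $f$ realizes $A$ means $|f\sigma\cap f\tau|_2 = A_{\sigma,\tau}$, and then rerouting cancels it to produce a genuine $\Z_2$-embedding. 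The main obstacle, which I expect to occupy most of the proof, is making the "reroute an edge through a twisted band changes its intersection with every other edge by a controlled bilinear term, and nothing else" step fully rigorous in the PL general-position category — i.e., a clean additivity lemma for modifying a drawing by a band-passage, including checking that passing $\sigma$ through $\mu_j$ twice (so $a_{\sigma,j}$ even) returns to a $\Z_2$-equivalent drawing, and that the self-crossing / diagonal contribution $a_{\sigma,j}^2 = a_{\sigma,j}$ accounts exactly for the *oddness* of $A$ being necessary and sufficient for the factorization $A = UU^T$. Part (a) is then literally quoted from Fulek–Kynčl; the work is isolating the single algebraic difference ($\Z_2$ symmetric bilinear forms: the odd/nondegenerate-over-$\Z_2$ ones are $\cong I_r$, giving $r$ cross-caps, versus the even ones which are $\cong$ sums of hyperbolic planes, giving $r/2$ handles) and matching it to the topological difference (one Möbius band per rank vs. one handle per two ranks).
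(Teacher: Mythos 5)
Your overall strategy coincides with the paper's: decompose the surface as a disk with ribbons, record for each edge a $\Z_2$-vector of band passages (equivalently, the $H_1$-coordinates of the cycle obtained by closing the edge up inside the disk), observe that the surface intersection number differs from the planar one by the bilinear form $\sum_j a_{\sigma,j}a_{\tau,j}$ (resp.\ $y_\sigma^T H_{2,g}y_\tau$), and for the converse factor the matrix (MacWilliams for odd, Albert for even) and reroute each edge through the bands by a connected sum with a cycle realizing its column. The bookkeeping difficulty you flag in the forward direction dissolves once you use that compatibility modulo 2 only constrains entries indexed by \emph{non-adjacent} pairs: the witnessing planar map is the projection $\pi\circ h$ of the surface drawing, and the diagonal and adjacent-pair entries of $A$ can simply be \emph{defined} to be the intersection numbers of the closed-up cycles $\widehat{\sigma}$, so $A=\Gamma$ outright and no decomposition $A=B+\Gamma$ is needed. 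Your sign worry in the converse is vacuous over $\Z_2$.

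The genuine gap is in the forward direction of (b), exactly where you need the produced matrix to be \emph{odd}. The diagonal entry $\Gamma_{\sigma,\sigma}=\sum_j a_{\sigma,j}$ can vanish for every edge $\sigma$ without the drawing lying in a disk (each edge may traverse an even number of the M\"obius bands with odd multiplicity), so the ``WLOG, otherwise $m=0$ suffices, or pad'' step is not valid; and if $\Gamma$ is even of rank $m$, naively setting one diagonal entry to $1$ could yield an odd matrix of rank $m+1$, violating the required bound. The paper closes this with Lemma \ref{l:alternate}: if $Y$ is $m\times n$ over $\Z_2$ and $Y^TY$ is even, then every column of $Y$ has even weight, hence all columns lie in an $(m-1)$-dimensional subspace and $\rk(Y^TY)\le m-1$ by the Gram-matrix rank bound (Lemma \ref{l:gram}); one may then flip a diagonal entry (which does not affect compatibility, since that condition never sees the diagonal) and still have rank at most $m$. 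Without this observation your construction does not deliver an odd matrix of rank $\le m$, so this is a missing idea rather than a presentational issue; the remainder of your outline, including the deferred band-rerouting additivity lemma, matches the paper's argument.
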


\begin{remark}\label{r:crit2}
(a) Theorem \ref{t:crit2sg}.a is proved in \cite[Proposition 10 and Corollary 11 for $g_0$]{FK19} ($\Longrightarrow$) and in a private communication by R. Fulek, using ideas of \cite[\S2]{SS13} ($\Longleftarrow$). Our proof is similar to the Fulek--Kyn\v cl proof.
The main difference is that using
homology groups allows us to use a well-known algebraic result (Lemma \ref{l:gram} below).
Presumably the  Fulek-Kyn\v cl proof of Theorem \ref{t:crit2sg}.a can also be generalized to Theorem \ref{t:critsg}.

Theorem 1.1.b is easily implied by Theorem 1.1.a and \cite[Lemma 3]{SS13}.
See also \cite[Lemma 2.4.2]{KS21}.

Theorem \ref{t:crit2sg}.b is different from \cite[Proposition 10 and Corollary 11 for $eg_0$]{FK19},
which is the implication $(\Longrightarrow)$ of the following result that can be proved similarly to Theorem 
\ref{t:crit2sg}.

{\it A graph $K$ has a $\Z_2$-embedding to some connected surface of Euler characteristic $e$ if and only if there is a matrix $A$ such that $\rk A\le2-e$ and $K$ is compatible modulo 2 to $A$}.

(b) The following result is implied by Theorem \ref{t:crit2sg}. \textit{There are an algorithm for checking $\Z_2$-embeddability of graphs to $S_g$ and an algorithm for checking $\Z_2$-embeddability of graphs to $M_m$}.

This result is known, although it was not stated explicitly in the literature.
The result follows because the property of a graph admitting a $\Z_2$-embedding to a fixed 2-surface is preserved under taking graph minors (i.e., under deleting of an edge or contracting of an edge).
Therefore by the Robertson-Seymour graph minor theorem \cite{RS04} there exists a finitely many
forbidden minors characterizing such a property.
Hence there exists a polynomial time algorithm (because we can test in a polynomial time if a fixed graph is a minor of a given graph \cite{KKR}).
This result is non-constructive, i.e. we only know that an algorithm exists, but the algorithm itself would be in practice even worse than exponential (`galactic' \cite{GA}).
This is so because even for the graphs embeddable into the torus the set of all the forbidden minors is not known. The current proof of Theorem \ref{t:crit2sg} together with
\cite[Lemma 2.3.2]{KS21} gives a practical algorithm for small $m$ and $g$.

(c) Theorem \ref{t:crit2sg}  
reduces $\Z_2$-embeddability to finding minimal rank of `partial matrix' (and to related problems); this is extensively studied in computer science, see e.g., \cite{Ko21} and survey \cite{NKS}.

(d) Puncturing a 2-surface (more precisely, deleting an open $2$-disk whose closure is a closed $2$-disk) does not change $\Z_2$-embeddability of graphs there.
So it suffices to study $\Z_2$-embeddability to a connected $2$-surface whose boundary is the circle.
By classification of $2$-surfaces, any such $2$-surface is homeomorphic to $S_g$ or to $M_m$.

(e) The expression $y_\sigma ^T E y_\tau = y_\sigma^T y_\tau$ from the proof of Theorem \ref{t:crit2sg}.b appeared in \cite[\S 2.2, equality (3)]{SS13}, \cite[\S 3.1, equality (1)]{FK19}.

(f) The expression $y_{\sigma}^T H_{2,g} y_{\tau}$ from the proof of Theorem \ref{t:crit2sg} appeared in \cite[equality (7) in \S 3]{PT19}.
The common ideas and methods of the proofs in the current paper and the proofs in \cite{PT19} may be covered by classical arguments and ideas and methods of \cite{FK19}, \cite{SS13}. In particular, our constructions of $\Z_2$- and $\Z$-embeddings from matrices uses known construction of a map inducing given homomorphism in homology.
We do not use cohomological arguments as opposed to \cite{Ha69}, \cite{Jo02} and \cite{PT19}.

\end{remark}

\begin{remark}[Closely related known results]\label{r:clorel}
(a) Some proofs of non-planarity of $K_5$ and $K_{3,3}$ actually show that these graphs are not $\Z_2$-embeddable to the plane, see e.g. survey \cite[\S1.4]{Sk18}.
By \cite[Theorem 1]{FK19} {\it if $K_{m,n}$ has a $\Z_2$-embedding to the sphere with $g$ handles (or, equivalently, $S_g$), then $g\ge \dfrac{(m-2)(n-2)}4-\dfrac{m-3}2$.}
Hence {\it if $K_{2n}$ has a $\Z_2$-embedding to the sphere with $g$ handles, then $g\ge \dfrac{(n-3)^2}4$.}


(b) {\it Let $M$ be either the plane or the torus or the M\"obius band.
If a graph has a $\Z_2$-embedding to $M$, then the graph has an embedding into $M$.}
For the plane this is the (strong) Hanani-Tutte Theorem (see e.g. survey \cite[Theorem 1.5.3]{Sk18} and the references therein).
For the torus this is proved in \cite{FPS}, and for the M\"obius band in \cite{PSS}, \cite{CKP+}.

(c) {\it There is a graph having a $\Z_2$-embedding to the sphere with 4 handles but
not an embedding into the sphere with 4 handles.} \cite{FK17}

(d) Theorem \ref{t:crit2sg} 
is related to the 
van Kampen-Shapiro-Wu criterion for embeddability of $k$-complexes into $\R^{2k}$ and to the Pat\'ak-Tancer criterion for embeddability of $k$-complexes into $2k$-manifolds (as explained in \cite[\S 1.3]{KS21}). See \cite{PT19}, \cite[Theorem 1 and Corollaries 6, 7, 8]{Ha69}, \cite{Jo02} and \cite{KS21} for higher dimensional analogues of results of this paper.
\end{remark}


Take an orientation on $S_g$. Assume that $f:K\to S_g$ is a general position PL map.

Then preimages $y_1,y_2\in K$ of any double point $y\in S_g$ lie in the interiors of edges.
By general position, $f$ is `linear' on some neighborhood $U_j$ of $y_j$ for each $j=1,2$.
Given orientation on the edges, we can take a basis of 2 vectors formed by oriented $fU_1,fU_2$.
The {\it intersection sign of $y$} is the sign $\pm 1$ of the basis.
The {\bf algebraic intersection number} $f\sigma\cdot f\tau\in\Z$ for non-adjacent oriented edges $\sigma,\tau$ is defined
as the sum of the intersection signs of all intersection points from $f\sigma\cap f\tau$.

\begin{figure}[ht]
\centerline{\includegraphics[width=4.5cm]
{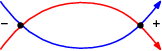} \qquad
\includegraphics{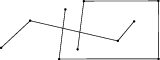}}
\caption{Two curves intersecting at an even number of points the sum of whose signs is zero (left) or non-zero (right).}
\label{f:curves}
\end{figure}


An example of a $\Z_2$-embedding which is not a $\Z$-embedding is shown in Fig. \ref{f:curves}, right.


The map $f$ is called a {\bf $\Z$-(almost) embedding} if $f\sigma\cdot f\tau=0$ for any pair $\sigma,\tau$ of non-adjacent edges.
(The sign of $f\sigma\cdot f\tau$ depends on an arbitrary choice of orientations for $\sigma,\tau$
and on the order of $\sigma,\tau$, but the condition $f\sigma\cdot f\tau=0$ does not.)

An integer analogue of Theorem \ref{t:crit2sg}.a is obtained by replacing $\Z_2$ by $\Z$ and the modulo 2 intersection number by the algebraic (`integer') intersection number.
Graph $K$ is called {\bf compatible} to a skew-symmetric matrix $A$ of size $|E| \times |E|$ with $\Z$-entries if there is a general position PL map $f:K \to \R^2$ such that for some collection of orientations on edges of $K$ we have
$$(C)\qquad A_{\sigma,\tau}=f\sigma\cdot f\tau\quad\text{for any non-adjacent $k$-faces $\sigma,\tau$ of $K$}.$$
It is not clear whether compatibility is algorithmically decidable.

Denote by $\rk_{\Q} A$ the rank over $\Q$ of matrix $A$ with $\Z$-entries.

\begin{theorem}\label{t:critsg} A graph $K$ has a $\Z$-embedding to $S_g$ if and only if
$K$ is compatible to a skew-symmetric matrix $A$ such that $\rk_\Q A \leq 2g$.
\end{theorem}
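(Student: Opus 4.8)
The plan is to mirror the structure of the proof of Theorem~\ref{t:crit2sg} but with $\Z$-coefficients throughout, using the skew-symmetric form on $H_1(S_g;\Z)$ in place of the mod~2 intersection form. Recall that $S_g$ is a disk with $2g$ ribbons, so it deformation retracts onto a wedge of $2g$ circles; hence $H_1(S_g;\Z)\cong\Z^{2g}$, and the algebraic self-intersection form of $S_g$ (in the closed surface obtained by capping the boundary circle, or directly via the intersection pairing on the orientable surface with boundary) is the standard symplectic form $H_{2,g}=\bigoplus_{i=1}^g\begin{pmatrix}0&1\\-1&0\end{pmatrix}$ in the basis dual to the ribbons $\lambda_1,\dots,\lambda_{2g}$. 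The two implications are then handled separately.

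\textbf{($\Longrightarrow$)} Suppose $f:K\to S_g$ is a $\Z$-embedding. Compose with the standard inclusion $S_g\subset\R^2$ that wraps each pair of interlacing ribbons around; more carefully, fix a general position PL map $g_0:S_g\to\R^2$ and set $f_0=g_0\circ f:K\to\R^2$. For an (oriented) edge $\sigma$ of $K$, since $\sigma$ is an arc, $f\sigma$ need not be a cycle; but for \emph{non-adjacent} $\sigma,\tau$ the difference $f_0\sigma\cdot f_0\tau-f\sigma\cdot_{S_g}f\tau$ is determined by how the ribbons are immersed in the plane, and is bilinear in the classes $[\![f\sigma]\!],[\![f\tau]\!]$ one gets by closing up the arcs $f\sigma,f\tau$ (using that their endpoints are vertices, which $g_0$ maps to a fixed placement). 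Writing $y_\sigma\in\Z^{2g}$ for the vector recording, with sign, how $f\sigma$ traverses each ribbon $\lambda_1,\dots,\lambda_{2g}$, one gets
\begin{equation*}
f_0\sigma\cdot f_0\tau \;=\; f\sigma\cdot_{S_g} f\tau \;+\; y_\sigma^T H_{2,g}\, y_\tau \;=\; y_\sigma^T H_{2,g}\, y_\tau
\end{equation*}
for non-adjacent $\sigma,\tau$, the last equality because $f$ is a $\Z$-embedding. Then $A:=Y^T H_{2,g}\, Y$, where $Y$ is the $2g\times|E|$ matrix with columns $y_\sigma$, is skew-symmetric, has $\rk_\Q A\le\rk_\Q H_{2,g}=2g$, and witnesses compatibility of $K$ with $A$ via $f_0$, after checking the diagonal/adjacency bookkeeping exactly as in the mod~2 case.

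\textbf{($\Longleftarrow$)} Conversely, suppose $K$ is compatible to a skew-symmetric $A$ with $\rk_\Q A\le 2g$ via $f_0:K\to\R^2$. The key algebraic input is an integral analogue of Lemma~\ref{l:gram}: a skew-symmetric integer matrix $A$ of rational rank $\le 2g$ can be written as $A=Y^T H_{2,g}\,Y$ for some integer matrix $Y$ of size $2g\times|E|$ (this is essentially the classification of skew-symmetric bilinear forms, with the rank bound giving enough symplectic summands; one may need $2g$ rather than $\rk_\Q A$ to absorb torsion/imprimitivity). Given such $Y$, build the $\Z$-embedding by a Hanani--Tutte--style redrawing: start from $f_0$ in the disk of $S_g$, and for each edge $\sigma$ route a "finger" of $f_0\sigma$ through the ribbon $\lambda_j$ algebraically $y_{\sigma,j}$ times (choosing directions to match signs), for $j=1,\dots,2g$. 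Each such finger push across a ribbon $\lambda_j$ changes the algebraic intersection number of $\sigma$ with every other edge $\tau$ by (the pairing of $\lambda_j$ with the fingers of $\tau$), so after all fingers are inserted the new map $f$ satisfies $f\sigma\cdot_{S_g}f\tau = A_{\sigma,\tau}-y_\sigma^T H_{2,g} y_\tau=0$ for non-adjacent $\sigma,\tau$. This uses the standard fact that one can realize a prescribed homological "winding vector" of an arc through the ribbons by a finger move; the orientation signs work out because $S_g$ is orientable and $H_{2,g}$ is exactly the intersection form.

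\textbf{Main obstacle.} The delicate point is the ($\Longleftarrow$) direction, specifically the integral factorization $A=Y^T H_{2,g}\,Y$ and its interaction with the finger-move construction. Over $\Z_2$ the Gram-type lemma (Lemma~\ref{l:gram}) is clean and the even/odd dichotomy pins down $S_g$ vs.\ $M_m$; over $\Z$ one must be careful that $\rk_\Q A\le 2g$ really suffices to get an \emph{integer} $Y$ landing in the \emph{symplectic} form of rank exactly $2g$ (as opposed to needing more handles because of elementary divisors $>1$ or a non-primitive image), and that the finger moves can independently prescribe each coordinate $y_{\sigma,j}$ without uncontrolled side effects on adjacent edges. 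I expect this to go through because each ribbon contributes an independent direction in $H_1(S_g)$ and the interlacing pattern of $S_g$ makes the self-intersection form exactly $H_{2,g}$; but verifying the signs in the finger-move step, and confirming that torsion does not force a rank increase, is where the real work lies.
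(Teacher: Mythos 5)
Your proposal follows essentially the same route as the paper: close each arc $f\sigma$ up to a cycle, compare the planar and surface intersection numbers via the homology classes of these cycles to get the Gramian matrix $A$, and in the reverse direction realize the columns of $Y$ by cycles running through the ribbons, connect-summed onto the edges. The one point you flag as delicate --- whether $\rk_\Q A\le 2g$ suffices for an \emph{integral} factorization $A=Y^T H_{g} Y$ through the standard symplectic form --- is indeed true and is exactly the paper's Lemma \ref{l:Z_Hg} (cited to Bourbaki): the elementary-divisor normal form of a skew-symmetric integer matrix consists of blocks $\begin{pmatrix}0&d\\-d&0\end{pmatrix}=\mathrm{diag}(1,d)^T\begin{pmatrix}0&1\\-1&0\end{pmatrix}\mathrm{diag}(1,d)$, so the divisors are absorbed into $Y$ (which need not be primitive) and no extra handles are required.
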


\begin{remark}\label{r:critsg}


(a) Analogues of Remark \ref{r:crit2}.bcdef, Remark \ref{r:clorel}.d hold for $\Z$-embeddability.

(b) Our statement of Theorem \ref{t:critsg} (as well as Theorem \ref{t:crit2sg}) is different from statements of \cite[Theorem 1, Corollary 3, Theorem 4]{PT19}. The main difference is that we use the property of \textit{compatibility} (modulo 2), while the statements of \cite[Theorem 1, Corollary 3, Theorem 4]{PT19} use cohomological terms. Also Theorems \ref{t:crit2sg} and \ref{t:critsg} are formulated only for graphs in $2$-surfaces, while \cite{PT19} concerns $k$-complexes in $2k$-manifolds, where $k \geq 1$ in \cite[Theorem 1, Corollary 3]{PT19} and $k \geq 3$ in \cite[Theorem 4]{PT19}.
\end{remark}

\section{Proofs}\label{s:proofs}

Let $\pi : S_g \rightarrow \mathbb{R}^2$ be the standard map (i.e., drawing with self-intersections), see Fig. \ref{f:sg}.


\begin{proof} [\textbf{Proof of the implication $(\Longrightarrow)$ of Theorem \ref{t:crit2sg}.a}]
Let $h:K \rightarrow S_g$ be a general position PL $\mathbb{Z}_2$-embedding. Take a disk $D' \subset D$.
We can assume that $hv \in D$ for any $v \in V$. For any edge $\sigma$ of graph $K$ take

$\bullet$ a polygonal line $\overline{\sigma}$ in the disk $D$ joining the ends of $h\sigma$;

$\bullet$ take a polygonal cycle $\widehat{\sigma}:=h\sigma \cup \overline{\sigma}$, see Fig. \ref{f:image_proof_1}.

We can assume that the polygonal cycles $\widehat{\sigma}$ and $\widehat{\tau}$ are in general position for any distinct $\sigma, \tau \in E(K)$.

\begin{figure}[ht]
\center{\includegraphics[scale=0.5, width=150pt]{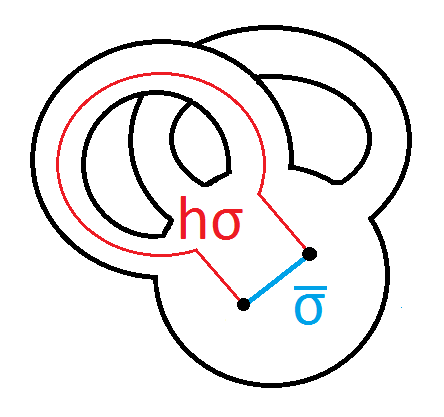}}
\caption{The cycle $\widehat{\sigma}$ in $S_1$.}
\label{f:image_proof_1}
\end{figure}


Take a map $f = \pi \circ h$. We can assume that $f$ is a general position PL map.

Define a matrix $A$ with $\Z_2$-entries by the formula $A_{\sigma,\sigma}=0, A_{\sigma,\tau} \equiv |\widehat{\sigma} \cap \widehat{\tau}| \mod 2$ for any distinct $\sigma, \tau \in E$.

The graph $K$ is compatible modulo 2 to $A$ because for any non-adjacent edges $\sigma$, $\tau$ of graph $K$ we have

$$
|f\sigma \cap f\tau|\stackrel{(1)}{=} |f\sigma \cap f\tau|_2 + |h\sigma \cap h\tau|_2  \stackrel{(2)}{=}
$$

$$
= |f\sigma \cap f\tau|_2 +|\overline{\sigma} \cap \overline{\tau}|_2 + |h\sigma \cap \overline{\tau}|_2 + |\overline{\sigma} \cap h\tau|_2 + |\widehat{\sigma} \cap \widehat{\tau}|_2 \stackrel{(3)}{=}
$$

$$
=
|(\pi \circ h) \sigma \cap (\pi \circ h)\tau|_2 + |\pi\overline{\sigma} \cap \pi\overline{\tau}|_2 + |(\pi \circ h)\sigma \cap \pi\overline{\tau}|_2 + |\pi\overline{\sigma} \cap (\pi \circ h)\tau|_2 + |\widehat{\sigma} \cap \widehat{\tau}|_2
\stackrel{(4)}{=}
$$

$$
= |\pi \widehat{\sigma} \cap \pi \widehat{\tau}|_2 + |\widehat{\sigma} \cap \widehat{\tau}|_2 \stackrel{(5)}{=} |\widehat{\sigma} \cap \widehat{\tau}|_2 \stackrel{(6)}{=}  A_{\sigma,\tau}.
$$

Here

$\bullet$ the first equality holds because the map $h$ is $\Z_2$-embedding;

$\bullet$ the second equality follows from the equality $\widehat{\sigma} = \overline{\sigma} \cup h\sigma$;

$\bullet$ the third equality follows from the equality $f = \pi \circ h$ because the map $\pi |${\tiny$_{D}$} is injective;

$\bullet$ the fourth equality follows from the equality $\pi\widehat{\sigma} = \pi\overline{\sigma} \cup (\pi \circ h)\sigma$;

$\bullet$ the fifth equality holds because the modulo 2 intersection number of two polygonal cycles $\pi \widehat{\sigma}$ and $\pi \widehat{\tau}$ in the plane is even.

$\bullet$ the sixth equality holds by definition of $A$.

The matrix $A$ is even by definition. Take a basis in $H_1(S_g;\Z_2)$ whose elements correspond to the ribbons $\lambda_1, \dots, \lambda_{2g}$. Then the matrix of the modulo 2 intersection form of $S_g$ in the basis is $H_{2,g}$. For any $\sigma \in E$ let $y_{\sigma}$ be the coordinate vector of the cycle $\widehat{\sigma}$ modulo 2 in the basis. The matrix $A$ is the Gramian matrix of the set $\{ y_{\sigma} | \sigma \in E\}$. By the following well-known lemma we have $\rk A \leq 2g$.


\end{proof}

\begin{lemma}\label{l:gram}
Let $v_1, v_2, \ldots, v_n$ be vectors in some $d$-dimensional linear space over a field with a bilinear symmetric product $(\cdot, \cdot)$. Let $G$ be the Gramian matrix of $v_1, v_2, \ldots, v_n$. Then $\rk G \leq d$.
\end{lemma}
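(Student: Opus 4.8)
The plan is to realize the Gramian matrix as a triple matrix product in which one factor is "thin", and then invoke submultiplicativity of rank.

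First I would fix a basis $e_1,\ldots,e_d$ of the ambient $d$-dimensional space $L$, and let $B$ be the $d\times d$ matrix of the bilinear form in this basis, i.e. $B_{kl}=(e_k,e_l)$. For each $i\in[n]$ write $v_i=\sum_{k=1}^d a_{ik}e_k$, and let $A=(a_{ik})$ be the $n\times d$ matrix whose $i$-th row is the coordinate vector of $v_i$.

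Next I would use bilinearity to compute, for all $i,j$,
$$G_{ij}=(v_i,v_j)=\Bigl(\sum_k a_{ik}e_k,\ \sum_l a_{jl}e_l\Bigr)=\sum_{k,l}a_{ik}a_{jl}(e_k,e_l)=(ABA^T)_{ij},$$
so that $G=ABA^T$. Finally, since $\rk$ of a product of matrices does not exceed $\rk$ of any factor, $\rk G\leq\rk A$; and $A$ has only $d$ columns, so $\rk A\leq d$, whence $\rk G\leq d$.

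There is essentially no obstacle here: the one point worth noting is that the field is arbitrary, so the familiar real-case reasoning via positive semidefiniteness or eigenvalues is unavailable, but the factorization $G=ABA^T$ together with submultiplicativity of rank is valid over any field. Symmetry of the form is not needed for the rank bound (it only ensures $G$ is symmetric), so the argument above is complete as stated.
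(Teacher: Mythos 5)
Your proof is correct, but it takes a different route from the paper's. The paper argues directly on the rows of $G$: it picks a subset $v_1,\ldots,v_k$ forming a basis of the span of all the $v_i$ (so $k\leq d$), expands each $v_t$ in that basis, and uses bilinearity to show that every row of $G$ is a linear combination of the $k$ rows indexed by the basis vectors, giving $\rk G\leq k\leq d$. You instead factor $G=ABA^T$ with $A$ the $n\times d$ coordinate matrix and $B$ the Gram matrix of the chosen basis of the ambient space, and conclude by submultiplicativity of rank. Both arguments are elementary, valid over an arbitrary field, and in fact yield the same slightly sharper bound $\rk G\leq\dim\mathrm{span}(v_1,\ldots,v_n)$ (in your version because $\rk A$ equals that dimension). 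Your factorization is arguably the cleaner packaging and makes the role of the form's matrix $B$ explicit; the paper's version avoids introducing coordinates for the whole ambient space and works only inside the span. You are also right that symmetry of the form is irrelevant to the rank bound.
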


\begin{proof}
Without loss of generality we can assume that $v_1, v_2, \dots, v_k$ is a basis of the linear span of vectors $v_1, v_2, \dots, v_n$, $k \leq d$. Then for each $t \in [n]$ there exist $\alpha_1, \alpha_2, ... \alpha_k$ such that $v_t = \sum\limits_{i=1}^k \alpha_i v_i$. Then $(v_t, v_s) = \left(\sum\limits_{i=1}^k \alpha_i v_i, v_s\right) = \sum\limits_{i=1}^k \alpha_i (v_i, v_s)$ for any $s$. Hence for each $t$ the row $((v_t, v_1),(v_t,v_2),\dots(v_t, v_n))$ of matrix $G$ is a linear combination of those rows of the matrix $G$ that correspond to vectors $v_1, v_2, \dots, v_k$. Hence $\text{rk}G \leq k \leq d$.
\end{proof}

The statement of the following simple algebraic result is appeared in a discussion with A. Skopenkov.

\begin{lemma}\label{l:alternate}
Let $Y$ be a matrix of size $m \times n$ with $\Z_2$-entries such that $Y^T Y$ is even. Then $\rk (Y^T Y) \leq m-1$.
\end{lemma}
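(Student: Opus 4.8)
The plan is to reduce the statement to Lemma \ref{l:gram} after observing that the hypothesis forces all columns of $Y$ to lie in a fixed hyperplane of $\Z_2^m$. Let $y_1,\ldots,y_n\in\Z_2^m$ be the columns of $Y$. Then $(Y^TY)_{i,j}=y_i^Ty_j$, so $Y^TY$ is precisely the Gramian matrix of $y_1,\ldots,y_n$ with respect to the standard bilinear (dot) product on $\Z_2^m$; note that $Y^TY$ is indeed symmetric, so "even" makes sense for it.

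The key point is that over $\Z_2$ one has $t^2=t$, hence for $w=(w_1,\ldots,w_m)\in\Z_2^m$ we get $w^Tw=\sum_{k=1}^m w_k^2=\sum_{k=1}^m w_k$. Thus the map $\ell:\Z_2^m\to\Z_2$, $\ell(w)=\sum_{k=1}^m w_k$, is a \emph{linear} functional, and it is nonzero. The assumption that $Y^TY$ is even means exactly that $y_i^Ty_i=0$, i.e. $\ell(y_i)=0$, for every $i\in[n]$. Therefore all the vectors $y_1,\ldots,y_n$ lie in the subspace $W:=\ker\ell$, which has dimension $m-1$.

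Now restrict the standard bilinear symmetric product of $\Z_2^m$ to $W$; its Gramian matrix on $y_1,\ldots,y_n\in W$ is still $Y^TY$. Applying Lemma \ref{l:gram} with $d=\dim W=m-1$ yields $\rk(Y^TY)\le m-1$, as required. I do not anticipate a real obstacle here: the only substantive observation is that $w\mapsto w^Tw$ is $\Z_2$-linear, so "even columns" form a hyperplane; everything else is an immediate invocation of Lemma \ref{l:gram}.
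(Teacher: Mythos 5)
Your proposal is correct and follows essentially the same route as the paper: both deduce from evenness of $Y^TY$ that each column has even coordinate sum, conclude that the columns lie in an $(m-1)$-dimensional subspace of $\Z_2^m$, and then apply Lemma \ref{l:gram}. The only cosmetic difference is that you identify this subspace as the kernel of the linear functional $w\mapsto\sum_k w_k$, while the paper exhibits an explicit spanning set of $m-1$ vectors for the same subspace.
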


\begin{proof}
The matrix $Y^T Y$ is the Gramian matrix of the set $\{b_i |\; i\in [n]\}$ of the columns of the matrix $Y$. Let us prove that the vectors $b_i, i\in [n]$ are elements of some $(m-1)$-dimensional linear space over $\Z_2$. Then by Lemma \ref{l:gram} we have $\rk (Y^T Y) \leq m-1$.

For any $i \in [n]$ of the matrix $Y$ we have $b_i^T b_i \equiv (Y^T Y)_{ii }\equiv 0 \mod 2$ because $Y^T Y$ is even. Therefore $b_i$ contains even number of $1$-entries.

For any $j\in [m-1]$ take a vector $b'_j$ of size $m$ such that $(b'_j)_m = (b'_j)_j = 1$ and any other entry of $b'_j$ is $0$.
Then any vector $v$ of size $m$ with $\Z_2$-entries containing exactly two $1$-entries is a linear combination of the vectors $b'_j, j\in [m-1]$. Then any vector $v$ of size $m$ with $\Z_2$-entries containing even number $1$-entries is a linear combination of the vectors $b'_j, j\in [m-1]$. Therefore vectors $b_i, i\in [n]$ are elements of some $(m-1)$-dimensional linear space over $\Z_2$.
\end{proof}

\begin{proof}[\textbf{Proof of the implication  $(\Longrightarrow)$ of Theorem \ref{t:crit2sg}.b}]

The proof can be obtained from the proof of the implication $(\Longrightarrow)$ of Theorem \ref{t:crit2sg}.a without the last paragraph by the following changes.

Replace $S_g$ by $M_m$. Replace $\lambda_k$ by $\mu_k$. Replace $2g$ by $m$. Replace $H_{2,g}$ by the identity matrix of size $m$. Replace the paragraph with the definition of matrix $A$ by the following argument.

Take a matrix $Y$ of size $m \times |E|$ such that the 
vectors $y_{\sigma}, \sigma \in E$ are the columns of $Y$.

If the matrix $Y^T Y$ is odd, then take a matrix 
$A=Y^T Y$. By Lemma \ref{l:gram} we have $\rk A \leq 
m$.

If the matrix $Y^T Y$ is even, then by Lemma \ref{l:alternate} we have $\rk (Y^T Y) \leq m-1$. Denote by $A$ the matrix obtained from $Y^T Y$ by replacing the entry $(Y^T Y)_{11}$ by $1$. Then $\rk A \leq  m$ and $A$ is odd.

\end{proof}

\begin{proof}[\textbf{Proof of the implication $(\Longrightarrow)$ of Theorem \ref{t:critsg}}]
Let $h$ be a general position PL $\mathbb{Z}$-embedding of graph $K$ to $S_g$. Take some collection of orientations on edges of $K$.
We can assume that $hv \in D$ for any $v \in V$. For any oriented edge $\sigma$ of graph $K$ take

$\bullet$ a polygonal line $\overline{\sigma}$ in the disk $D$ joining the ends of the oriented polygonal line $h\sigma$;

$\bullet$ an orientation on $\overline{\sigma}$ such that polygonal cycle $\widehat{\sigma} := h\sigma \cup \overline{\sigma}$ is an oriented polygonal cycle, see Fig. \ref{f:image_proof_1}.

We can assume that $\widehat{\sigma}$ and $\widehat{\tau}$ are in general position for any distinct  $\sigma, \tau \in E(K)$.


Take a map $f = \pi \circ h$. We can assume that $f$ is a general position PL map.

Define a matrix $A$ with $\Z$-entries by the formula $A_{\sigma,\sigma}=0, A_{\sigma,\tau} = -\widehat{\sigma} \cdot \widehat{\tau}$ for any distinct $\sigma, \tau \in E$.


The graph $K$ is compatible to $A$ because for any non-adjacent edges $\sigma, \tau$ of graph $K$ we have
$$
f\sigma \cdot f\tau \stackrel{(1)}{=} f\sigma \cdot f\tau - h\sigma \cdot h\tau  \stackrel{(2)}{=}
$$
$$
= f\sigma \cdot f\tau +\overline{\sigma} \cdot \overline{\tau} + h\sigma \cdot \overline{\tau} + \overline{\sigma} \cdot h\tau - \widehat{\sigma} \cdot \widehat{\tau} \stackrel{(3)}{=}
$$
$$
=
(\pi \circ h) \sigma \cdot (\pi \circ h)\tau + \pi\overline{\sigma} \cdot \pi\overline{\tau} + (\pi \circ h)\sigma \cdot \pi\overline{\tau} + \pi\overline{\sigma} \cdot (\pi \circ h)\tau - \widehat{\sigma} \cdot \widehat{\tau}
\stackrel{(4)}=
$$
$$
= \pi \widehat{\sigma} \cdot \pi \widehat{\tau} - \widehat{\sigma} \cdot \widehat{\tau} \stackrel{(5)}{=}
-\widehat{\sigma} \cdot \widehat{\tau} \stackrel{(6)}{=} A_{\sigma, \tau}.
$$
Here

$\bullet$ the first equality holds because the map $h$ is $\Z$-embedding;

$\bullet$ the second equality follow from the equality $\widehat{\sigma} = \overline{\sigma} \cup h\sigma$;

$\bullet$ the third equality follows from the definition of the map $f$ because the map $\pi |${\tiny$_{D}$} is injective;

$\bullet$ the fourth equality follows from the equality $\pi\widehat{\sigma} = \pi\overline{\sigma} \cup (\pi \circ h)\sigma$;

$\bullet$ the fifth equality holds because the integer intersection number of two oriented polygonal cycles $\pi \widehat{\sigma}$ and $\pi \widehat{\tau}$ in the plane is zero.

$\bullet$ the sixth equality holds by the definition of $A$.


The matrix $A$ is skew-symmetric by definition. Take a basis in $\Z$-module $H_1(S_g;\Z)$ whose elements correspond to the ribbons $\lambda_1, \dots, \lambda_{2g}$ such that the matrix of the integer intersection form of $S_g$ in the basis is $-H_{g}$. For any $\sigma \in E$ let $y_{\sigma}$ be the coordinate vector of the cycle $\widehat{\sigma}$.
The matrix $A$ is the Gramian matrix of the set $\{ y_{\sigma} | \sigma \in E\}$. Consider the matrix $A$ as a matrix over $\Q$. Consider the vector $y_{\sigma}$ as a vector over $\Q$. By Lemma \ref{l:gram} we have $\rk_{\Q} A \leq 2g$.


\end{proof}

The following algebraic lemma is proved in \cite[Theorem 3]{Al}.

\begin{lemma}\label{l:matrix_Hg}
For each even matrix $A$ of size $n \times n$ with $\Z_2$-entries its rank is even and there is a matrix $Y$ of size $\rk A \times n$ such that $A = Y^T H_{2,\frac{\rk A}{2}} Y$.
\end{lemma}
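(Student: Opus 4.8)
The plan is to reinterpret the even matrix $A$ as the Gram matrix of an alternating bilinear form over $\Z_2$ and then reduce to the symplectic normal form of such a form.

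First I would regard $A$ as the matrix, in the standard basis $e_1,\dots,e_n$ of $V:=\Z_2^n$, of the symmetric bilinear form $b(x,y):=x^TAy$. Since $A$ is even, $b(x,x)=0$ for every $x\in V$ (the diagonal of $A$ vanishes and each off-diagonal contribution occurs twice), so $b$ is alternating. Let $R:=\ker A=\{x\in V:\ b(x,y)=0\ \text{for all}\ y\in V\}$ be its radical, so that $\dim R=n-\rk A$. I would pick a complement $W$ with $V=W\oplus R$; then $\dim W=\rk A=:r$, and $b|_W$ is nondegenerate, since $w\in W$ with $b(w,W)=0$ also satisfies $b(w,R)=0$, hence $w\in W\cap R=0$.

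Next I would run symplectic Gram--Schmidt on $(W,b|_W)$: choose $w_1\ne 0$, use nondegeneracy to find $w_2$ with $b(w_1,w_2)=1$ (over $\Z_2$ this automatically forces $w_1,w_2$ to be linearly independent and to span a nondegenerate ``hyperbolic plane'' with Gram matrix $H_{2,1}$), decompose $W=\langle w_1,w_2\rangle\oplus\langle w_1,w_2\rangle^{\perp}$ with the orthogonal complement taken inside $W$, and induct on that complement. This produces a basis $w_1,\dots,w_r$ of $W$ in which the Gram matrix of $b|_W$ is precisely $H_{2,r/2}$; in particular $r$ is even, which establishes the first assertion of the lemma.

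Finally I would assemble $Y$: for each $i\in[n]$ write $e_i=u_i+\rho_i$ with $u_i\in W$ and $\rho_i\in R$, and let $Y$ be the $r\times n$ matrix whose $i$-th column $y_i$ is the coordinate vector of $u_i$ in the basis $w_1,\dots,w_r$. Since $\rho_i$ and $\rho_j$ lie in the radical, $A_{ij}=b(e_i,e_j)=b(u_i,u_j)=y_i^TH_{2,r/2}\,y_j=(Y^TH_{2,r/2}Y)_{ij}$, giving the desired factorization $A=Y^TH_{2,r/2}Y$ with $Y$ of size $\rk A\times n$. The step I expect to be the only non-routine one is the symplectic normal form for a nondegenerate alternating form over $\Z_2$ (equivalently, that such a form has even rank and admits a symplectic basis); the one place to be careful in the bookkeeping is to carry out the change of basis once and for all inside $W$, so that $H_{2,r/2}$ itself --- and not a conjugate of it --- appears in the final product.
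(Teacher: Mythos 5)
Your argument is correct. Note, however, that the paper does not prove this lemma at all: it simply cites it as \cite[Theorem 3]{Al} (Albert's 1938 classification of symmetric and alternate matrices over an arbitrary field). So what you have supplied is a self-contained replacement for that citation, and it is the standard one: reinterpret $A$ as an alternating form $b$ on $\Z_2^n$ (your observation that zero diagonal forces $b(x,x)=0$ over $\Z_2$ is exactly the point where evenness is used), split off the radical $\ker A$, and put the nondegenerate part into symplectic normal form by hyperbolic-plane Gram--Schmidt; evenness of the rank and the factorization $A=Y^TH_{2,\rk A/2}Y$ both drop out, with $Y$ the matrix of coordinate vectors of the projections of the standard basis to the chosen complement of the radical. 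The one detail worth spelling out if you write this up in full is the inductive step $W=\langle w_1,w_2\rangle\oplus\langle w_1,w_2\rangle^{\perp}$, which needs the explicit projection $v\mapsto v-b(v,w_2)w_1-b(v,w_1)w_2$ and the check that $b$ stays nondegenerate on the complement; this is routine. Your proof buys self-containedness and makes visible exactly where the hypothesis ``even'' enters, at the cost of redoing a classical result that the paper is content to quote.
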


\begin{proof}[\textbf{Proof of the implication $(\Longleftarrow)$ of Theorem \ref{t:crit2sg}.a}]

Take $f$ and $A$ from the definition of compatibility modulo 2.

We may assume that $\rk A = 2g$, because $\Z_2$-embeddability of a graph to $S_g$ follows from $\Z_2$-embeddability of the graph to $S_{g-1}$. Apply Lemma \ref{l:matrix_Hg} to the matrix $A$ and $n = |E|$. We get that the number $\rk A$ is even and there is a matrix $Y$ of size $2g \times E$ such that $A = Y^T H_{2,g} Y$. Denote by $y_{\sigma}$ the corresponding column of matrix $Y$.



Take a disk $D' \subset D$. We may assume that $fK \subset D'$. Take any $\sigma \in E$.

Take a cycle $\widetilde{\sigma} \subset S_g \setminus D'$ (see Fig. \ref{f:image_proof_2}) such that

($\widetilde{\sigma}$1) the polygonal line $\widetilde{\sigma}$ passes exactly once through the ribbon $\lambda_k$ if $y_{\sigma, k} = 1$ and does not pass through the ribbon $\lambda_k$ otherwise.



Take a polygonal line $l_{\sigma}$ joining a point in the cycle $\widetilde{\sigma}$ and a point in the polygonal line $f\sigma$ such that

(l1) $l_{\sigma} \cap f\tau = \emptyset$ for any distinct $\sigma, \tau \in E(K)$;

(l2) $l_{\sigma} \cap \widetilde{\tau} = \emptyset$ for any distinct $\sigma, \tau \in E(K)$.

\begin{figure}[ht]
\center{\includegraphics[scale=0.5, width=300pt]{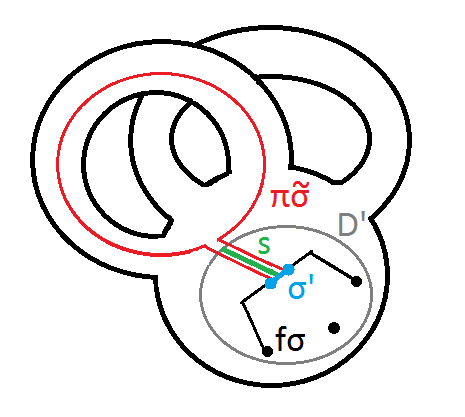}}
\caption{The polygonal line $h \sigma$}
\label{f:image_proof_2}
\end{figure}

Take a general position PL map $h: K \rightarrow S_g$ obtained from $f$ by connected summation of $f|_\sigma$ and $\widetilde{\sigma}$ along the polygonal line $l_\sigma$ for any $\sigma \in E$. We may assume that the following property holds by the properties (l1)-(l2).

(h) $h\sigma \cap h\tau \subset (\widetilde{\sigma} \cap \widetilde{\tau}) \cup (f\sigma \cap f\tau)$ for any $\sigma, \tau \in E$ such that $\sigma \neq \tau$.

By the property ($\widetilde{\sigma}$1) there is a basis in $H_1(S_g,\Z_2)$ such that

(b1) $\widetilde{\sigma}$ represents the homology class with coordinate vector $y_{\sigma}$ in this basis for any $\sigma \in E$;

(b2) the matrix $H_{2,g}$ is the matrix of the modulo 2 intersection form in this basis.

Then $h$ is a $\Z_2$-embedding, because for any non-adjacent edges $\sigma, \tau$ we have

$$
|h\sigma \cap h\tau|_2 \stackrel{(1)}= |(\widetilde{\sigma} \cap \widetilde{\tau})|_2 + |f\sigma \cap f\tau|_2 \stackrel{(2)}=  y^T_{\sigma} H_{2,g} y_{\tau} + |f\sigma \cap f\tau|_2 \stackrel{(3)}= 0.$$

Here

$\bullet$ the first equality follows from the property (h).

$\bullet$ the second equality follows from the properties (b1)-(b2);

$\bullet$ the third equality follows from the definitions of compatibility modulo 2 and the vectors $y_\sigma, \sigma \in E$.
\end{proof}

The following algebraic lemma is proved in \cite[Theorem 1]{MW69}.

\begin{lemma}\label{l:matrix_I}
For each odd matrix $A$ of size $n \times n$ with $\Z_2$-entries there is a matrix $Y$ of size $\rk A \times n$ such that $A = Y^T Y$.
\end{lemma}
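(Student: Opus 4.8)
\emph{Plan.} I would deduce Lemma \ref{l:matrix_I} from the following normal form statement: an odd symmetric matrix $A$ of rank $r$ over $\Z_2$ is congruent to $I_r \oplus 0_{n-r}$, i.e. there is an invertible $\Z_2$-matrix $P$ with $P^T A P = I_r \oplus 0_{n-r}$, where $I_r$ is the identity matrix. Given this, write $I_r \oplus 0_{n-r} = B^T B$ for the $r \times n$ matrix $B = (I_r \mid 0)$; then $A = (P^{-1})^T B^T B P^{-1} = Y^T Y$ with $Y := B P^{-1}$ of size $r \times n = \rk A \times n$, as required. So everything reduces to the normal form.

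To produce $P$ I would induct on $n$, using simultaneous elementary row and column operations and permutations of coordinates, each of which changes $A$ by a congruence over $\Z_2$. Since $A$ is odd we may assume after a permutation that $A_{11} = 1$; adding the first row to each row $j$ with $A_{j1} = 1$, together with the mirror column operations, clears the first row and column off the diagonal and produces a congruent matrix of the form $(1) \oplus A'$, with $A'$ symmetric of size $n-1$ and $\rk A' = r - 1$. If $A' = 0$ then $r = 1$ and we are done; if $A'$ is odd, the inductive hypothesis gives $A' \cong I_{r-1} \oplus 0$, hence $(1) \oplus A' \cong I_r \oplus 0$.

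The one genuinely delicate case is $A'$ even and nonzero, since then $A'$ by itself is not congruent to a diagonal matrix. Here I would first invoke the classification of alternating forms over $\Z_2$ (provable by the same elimination, repeatedly splitting off $2 \times 2$ blocks $\left(\begin{smallmatrix} 0 & 1 \\ 1 & 0 \end{smallmatrix}\right)$) to get $A' \cong \left(\begin{smallmatrix} 0 & 1 \\ 1 & 0 \end{smallmatrix}\right)^{\oplus k} \oplus 0$ with $r - 1 = 2k$, and then repeatedly apply the elementary isometry $(1) \oplus \left(\begin{smallmatrix} 0 & 1 \\ 1 & 0 \end{smallmatrix}\right) \cong I_3$ over $\Z_2$ (realized, on a basis $v, a, b$ with $v$ anisotropic and $a, b$ a hyperbolic pair, by $v, a, b \mapsto v+a, \; v+b, \; v+a+b$). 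Each application absorbs one hyperbolic block into an already present $(1)$ and returns two copies of $(1)$ in its place; starting from $(1) \oplus \left(\begin{smallmatrix} 0 & 1 \\ 1 & 0 \end{smallmatrix}\right)^{\oplus k}$, after $k$ steps this reaches $I_{2k+1} = I_r$, so $(1) \oplus A' \cong I_r \oplus 0$, completing the induction.

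I expect this last step --- absorbing hyperbolic blocks using a single anisotropic vector, and underneath it the alternating normal form --- to be the main obstacle; it is exactly the point where oddness of $A$ is used, and it explains why $\rk A$ rows (and not one more) suffice.
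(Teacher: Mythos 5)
Your argument is correct, but it is a genuinely different route from the paper's: the paper does not prove Lemma \ref{l:matrix_I} at all, it simply cites \cite[Theorem 1]{MW69}, whereas you supply a self-contained proof via the classification of symmetric bilinear forms over $\Z_2$. All the steps check out: the reduction to the congruence normal form $I_r\oplus 0_{n-r}$ and the factorization $Y=BP^{-1}$ with $B=(I_r\mid 0)$ are routine; the symmetric elimination correctly splits off a $(1)$ using an odd diagonal entry and preserves rank, so $\rk A'=r-1$; and the key isometry $(1)\oplus\left(\begin{smallmatrix}0&1\\1&0\end{smallmatrix}\right)\cong I_3$ is verified by your explicit basis $v+a,\ v+b,\ v+a+b$ (all three are anisotropic and pairwise orthogonal over $\Z_2$, and they span the same space since their sum is $v$). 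This is exactly the point where oddness is used, as you say, and it is also why the even case needs the separate treatment of Lemma \ref{l:matrix_Hg}. Two small remarks: for the intermediate step ``$A'$ even $\Rightarrow A'\cong\left(\begin{smallmatrix}0&1\\1&0\end{smallmatrix}\right)^{\oplus k}\oplus 0$'' you could simply invoke the paper's Lemma \ref{l:matrix_Hg} (cited to \cite{Al}) rather than resketching the alternating normal form; and your phrase ``returns two copies of $(1)$ in its place'' should be read as a net gain of two anisotropic basis vectors per absorbed hyperbolic block, which is what the count $I_{2k+1}=I_r$ after $k$ steps requires. What your approach buys is an elementary, fully explicit proof in place of an external reference; what the paper's citation buys is brevity and a pointer to the general finite-field statement.
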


\begin{proof}[\textbf{Proof of the implication $(\Longleftarrow)$ of Theorem \ref{t:crit2sg}.b}]
The proof can be obtained from the proof of the implication $(\Longrightarrow)$ of Theorem \ref{t:crit2sg}.a by the following changes.

Replace Lemma \ref{l:matrix_Hg} by Lemma \ref{l:matrix_I}. Replace $S_g$ by $M_m$. Replace $\lambda_k$ by $\mu_k$. Replace $2g$ by $m$. Replace $H_{2,g}$ by the identity matrix of size $m$.
\end{proof}


The following algebraic lemma easily follows from \cite[Chapter IX, \S 5, Theorem 1]{Bo66}.

\begin{lemma}\label{l:Z_Hg}
For each skew-symmetric matrix $A$ of size $n \times n$ with $\Z$-entries its rank over $\Q$ is even and there is a matrix $B$ of size $\rk_{\Q} A \times n$ with $\Z$-entries such that $A = B^T H_{\frac{\rk A}{2}} B$.
\end{lemma}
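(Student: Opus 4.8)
The plan is to deduce the statement from the classification of alternating integral bilinear forms. View $A$ as the matrix, in the standard basis of $\Z^n$, of the alternating bilinear form $\omega(x,y)=x^TAy$. By \cite[Chapter IX, \S 5, Theorem 1]{Bo66} there are a basis of $\Z^n$, i.e.\ a matrix $P\in GL_n(\Z)$, and nonzero integers $d_1\mid d_2\mid\cdots\mid d_g$ with
$$P^TAP=\Lambda:=\diag\bigl(d_1J,\,d_2J,\ldots,d_gJ,\,0,\ldots,0\bigr),\qquad J:=\begin{pmatrix}0&1\\-1&0\end{pmatrix},$$
the $g$ hyperbolic blocks being followed by $n-2g$ zero rows and columns. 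Since $\Lambda$ is congruent to $A$ over $\Q$ as well, and its upper-left $2g\times 2g$ block has determinant $(d_1\cdots d_g)^2\ne 0$ while the rest vanishes, we get $\rk_\Q A=2g$; in particular it is even, which is the first assertion.

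The second step is to absorb the invariant factors $d_i$ at the cost of a non-unimodular integral change of coordinates. The elementary identity
$$dJ=\begin{pmatrix}d&0\\0&1\end{pmatrix}^T J\begin{pmatrix}d&0\\0&1\end{pmatrix}$$
shows that, setting $D:=\diag(d_1,1,d_2,1,\ldots,d_g,1)$ of size $2g$, one has $D^TH_gD=\diag(d_1J,\ldots,d_gJ)$, where $H_g:=\diag(J,\ldots,J)$ ($g$ copies). Appending $n-2g$ zero columns to $D$ gives a $2g\times n$ integer matrix $\widetilde B:=(D\mid 0)$, and a block computation yields $\widetilde B^TH_g\widetilde B=\Lambda$.

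Finally I would combine the two steps:
$$A=(P^{-1})^T\Lambda P^{-1}=(P^{-1})^T\widetilde B^TH_g\widetilde B P^{-1}=B^TH_gB,\qquad B:=\widetilde B P^{-1},$$
where $B$ is an integer matrix of size $2g\times n=\rk_\Q A\times n$, as required. I do not expect a genuine obstacle: the substantive input is entirely the cited classification theorem, and the rest is bookkeeping --- keeping the congruence convention straight (so that it is $P^{-1}$, not $P$, that enters $B$), verifying the block identity $\widetilde B^TH_g\widetilde B=\Lambda$, and reconciling the paper's precise normalisation and sign convention for $H_g$ (if $H_g$ is, say, the block-antidiagonal symplectic matrix rather than $\diag(J,\ldots,J)$, or differs by an overall sign, one conjugates $\Lambda$ by the appropriate element of $GL_{2g}(\Z)$, or folds a signed permutation into $B$, which changes nothing). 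Note that the divisibility chain $d_1\mid\cdots\mid d_g$ is never used; only $d_i\ne 0$ matters.
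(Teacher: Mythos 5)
Your proposal is correct and follows exactly the route the paper intends: the paper gives no proof of Lemma \ref{l:Z_Hg} beyond asserting that it ``easily follows from'' the same Bourbaki classification theorem you cite, and your argument (diagonalising to $\diag(d_1J,\ldots,d_gJ,0,\ldots,0)$ over $\Z$, absorbing the invariant factors via $D=\diag(d_1,1,\ldots,d_g,1)$, and setting $B=\widetilde B P^{-1}$) is precisely the bookkeeping that makes that assertion rigorous. The normalisation caveat you raise about $H_g$ is apt, since the paper never defines $H_g$ explicitly, and your remark that any discrepancy is resolved by a signed permutation in $GL_{2g}(\Z)$ disposes of it.
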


\begin{proof}[\textbf{Proof of the implication $(\Longleftarrow)$ of Theorem \ref{t:critsg}}]

We may assume that $\rk A = 2g$, because $\Z$-embeddability of a graph to $S_g$ follows from $\Z$-embeddability of the graph to $S_{g-1}$. Apply Lemma \ref{l:Z_Hg} to the matrix $A$ and $n = |E|$. We get that the number $\rk A$ is even and there is a matrix $Y$ of size $2g \times E$ such that $A = Y^T H_{g} Y$. Denote by $y_{\sigma}$ the corresponding column of matrix $Y$. Then for any non-adjacent edges $\sigma, \tau$ we have

$$y_\sigma^T H_{g} y_\tau = f\sigma \cdot f\tau$$

Take an arbitrary collection of orientations on the edges of the graph $K$. Take corresponding orientations on the polygonal lines $f\sigma$ for any $\sigma \in E$. For any $i \in [2g]$ take an oriented polygonal line $\gamma_i$ passing in the middle of the ribbon $\lambda_i$ and joining two points on the boundary of $D$. 

Take a disk $D' \subset D$. We may assume that $fK \subset D'$. For any $\sigma \in E$ take an oriented cycle $\widetilde{\sigma} \subset S_g \setminus D'$ (see Fig. \ref{f:image_proof_2}) such that

($\widetilde{\sigma}$1$'$) the oriented polygonal line $\widetilde{\sigma}$ passes through the ribbon $\lambda_k$ in the positive direction (i.e. in the direction of the oriented polygonal line $\gamma_k$) $|y_{\sigma, k}|$ times if $y_{\sigma,k} > 0$;
 
($\widetilde{\sigma}$2$'$) the oriented polygonal line $\widetilde{\sigma}$ passes through the ribbon $\lambda_k$ in the negative direction $|y_{\sigma,k}|$ times if $y_{\sigma,k} < 0$;

($\widetilde{\sigma}$3$'$) the oriented polygonal line $\widetilde{\sigma}$ does not pass through the ribbon $\lambda_k$ if $y_{\sigma,k}=0$.

Take a polygonal line $l_{\sigma}$ joining a point in the cycle $\widetilde{\sigma}$ and a point in the polygonal line $f\sigma$ such that the properties (l1)-(l2) from the proof of the implication $(\Longleftarrow)$ of Theorem \ref{t:crit2sg}.a hold.

Take a general position map $h: K \rightarrow S_g$ obtained from $f$ by connected summation of $f|_\sigma$ and $\widetilde{\sigma}$ along the polygonal line $l_\sigma$ for any $\sigma \in E$. We may assume that the property (h) from the proof of the implication $(\Longleftarrow)$ of Theorem \ref{t:crit2sg}.a hold by the properties (l1)-(l2).

By the properties $\widetilde{\sigma}$1$'$-$\widetilde{\sigma}$3$'$ there is a basis in $\Z$-module on $H_1(S_g,\Z)$ such that

(b1$'$) $\widetilde{\sigma}$ represents the homology class with coordinate vector $y_{\sigma}$ in this basis for any $\sigma \in E$;

(b2$'$) the matrix $-H_{g}$ is the matrix of the integer intersection form in this basis.

Then the map $h$ is a $\Z$-embedding, because for any non-adjacent edges $\sigma, \tau$ we have

$$
h\sigma \cdot h\tau \stackrel{(1)}= \widetilde{\sigma} \cdot \widetilde{\tau} + f\sigma \cap f\tau \stackrel{(2)}= -y^T_{\sigma} H_{g} y_{\tau} + f\sigma \cap f\tau \stackrel{(3)}=0.$$

Here the proof of the equalities (1)-(3) is the same as the proof of the congruences (1)-(3) in the proof of the implication $(\Longleftarrow)$ of Theorem \ref{t:crit2sg}.a.
\end{proof}



\end{document}